\documentclass[12pt, reqno]{amsart}
\usepackage{amsmath, amsthm, amscd, amsfonts, amssymb, graphicx, xcolor}
\usepackage{mathtools}
\usepackage[bookmarksnumbered, colorlinks, plainpages]{hyperref}
\usepackage{enumitem}
\usepackage[english]{babel}
\usepackage{fancyhdr}

\textheight 22.5truecm \textwidth 14.5truecm
\setlength{\oddsidemargin}{0.35in}\setlength{\evensidemargin}{0.35in}

\setlength{\topmargin}{-.5cm}

\newtheorem{theorem}{Theorem}[section]
\newtheorem{lemma}[theorem]{Lemma}

\theoremstyle{definition}
\newtheorem{definition}[theorem]{Definition}
\newtheorem{example}[theorem]{Example}

\theoremstyle{remark}

\numberwithin{equation}{section}

\begin{document}
\setcounter{page}{1}

\color{darkgray}{
\noindent 
{\small Seminar Nasional Pendidikan Matematika (SNPM)}\hfill     {\small ISSN: 2988-3458}\\
{\small Vol 1 (2023) 540-552}\hfill  {\small doi.org/10.31219/osf.io/7x3ks}}

\centerline{}

\centerline{}

\title[Structures of \((m,n)\)-seminearring]{Structures of \(\boldsymbol{(m,n)}\)-seminearring
}

\author[M. S. L. Liedokto]{Muhsang Sudadama Lieko Liedokto$^1$$^{*}$}

\address{$^{1}$ Department of Mathematics, Universitas Negeri Malang, Malang, Indonesia.}
\email{\textcolor[rgb]{0.00,0.00,0.84}{muhsangsll@gmail.com}}


\subjclass[2020]{Primary 03G27; Secondary 16Y30.}

\keywords{seminearring, \((m,n)\)-ring, \((m,n)\)-semiring, \((m,n)\)-seminearring
\newline \indent $^{*}$ Corresponding author}


\begin{abstract}
This article introduces the \((m,n)\)-seminearring structure, which is a generalization of \((m,n)\)-semiring. This research aims to develop theories of \((m,n)\)-seminearring. In particular, the concepts of \((m,n)\)-seminearring, \((m,n)\)-subseminearring, \((m,n)\)-ideal, \((m,n)\)-seminearring with unity, homomorphism of \((m,n)\)-seminearrings, construction of factor \((m,n)\)-seminearring of \((m,n)\)-seminearring by congruence relation, and some of its exciting properties are given. The method used in this study is to adopt the theory in \((m,n)\)-semiring.
\end{abstract} \maketitle

\section{Introduction}

\noindent Algebra structures play a vital role in mathematics, and they have a wide range of applications in many disciplines, such as theoretical physics, computer science, graph theory, cryptography, statistics, economics, and other fields. Research in generalizations of algebra structures has been a main focus of mathematicians for a long time. One interesting generalization is ternary algebraic systems theory, which was first introduced by Lehmer \cite{Lehmer}.

Lister \cite{Lister} introduced the ternary multiplication operation on rings called a ternary ring. A ternary ring is a special case of $ (m, n) $-ring. The $ (m, n) $-ring structure has been studied by researchers such as Crombez \cite{Crombez}; Crombez \& Timm \cite{Crombez-Timm}; Dudek \cite{Dudek}; Leeson \& Butson \cite{Leeson(a),Leeson(b)}; Pop \cite{Pop}; Pop \& Iancu \cite{Pop-Iancu(1),Pop-Iancu(2),Iancu-Pop}.Dutta \& Kar \cite{Dutta-Kar} introduced an idea of ternary semiring that is a generalization of a ternary ring. A ternary semiring is a special case of $ (m,n) $-semiring. Further, the concept of $ (m, n) $-semiring has been introduced, and some of its properties are discussed in the article: Pop \& Pop \cite{Pop-Pop(1),Pop-Pop(2)}; Alam et al. \cite{Alam}; Hila \cite{Hila-Kar-Kuka}; Pop \& Lauran \cite{Pop-Lauran(1)}; Davvaz \& Mohammadi \cite{Davvaz-Mohammadi}; Chaudhari et al. \cite{Chaudhari}. Subsequently, Vijayakumar \& Dhivya \cite{Vijayakumar-Bharathi(2)} introduced the concept of ternary seminearring, which is a generalization of ternary semiring. In their article, Vijayakumar \& Dhivya \cite{Vijayakumar-Bharathi(2)} described the concepts of ternary seminearring, an ideal of ternary seminearring, homomorphism of ternary seminearrings, and their properties. In the next article, Vijayakumar \& Dhivya \cite{Vijayakumar-Bharathi(1)} constructed factor ternary seminearring by congruence relation on ternary seminearring and proved the isomorphism theorem on ternary seminearring.

However, to the researcher's knowledge, there has been no research related to \((m,n)\)-seminearring, that is, the generalization of ternary seminearring. The concept of \((m,n)\)-seminearring is intriguing and has great potential to provide a deeper understanding of algebraic properties in a more general context. Therefore, this research aims to develop some theory related to \((m,n)\)-seminearring. The focus of this research will be on understanding the concepts of \((m,n)\)-semi- nearring, \((m,n)\)-subseminearring, \((m,n)\)-ideal, \((m,n)\)-seminearring with unity, homomorphism of \((m,n)\)-seminearrings, and construction of factor \((m,n)\)-semi- nearring of \((m,n)\)-seminearring by congruence relation. This research is expected to significantly contribute to broadening the understanding of more general algebraic structures and exploring interesting properties of $ (m,n) $-seminearrings.

\section{METHODS}
\noindent Systematic steps will be taken while working on this article. First, an extensive literature review is conducted to review previous contributions and identify research gaps. Second, understanding the properties of ternary seminearring and $ (m,n)$-semiring structures. Finally, definitions and properties are formulated by adopting existing theories in $ (m, n) $-semiring.

\section{RESULTS AND DISCUSSION}
Let \(X\) be a nonempty set. A subclass \(\rho\) of \(X\times X\) is called a relation on \(X\). A relation \(\rho\) on \(X\) is an equivalence relation on \(X\) provided \(\rho\) is: reflexive, \((x,x)\in \rho\) for all \(x\in X\); symmetric, if \((x,y)\in \rho\) then \((y,x)\in \rho\); transitive, if \((x,y)\in \rho\) and \((y,z)\in X\) then \((x,z)\in \rho\). If \(\rho\) is an equivalence relation on \(X\) and \((x,y)\in \rho\), we say that \(x\) is equivalent to \(y\) under \(\rho\) and write \(x\rho y\). Let \(\rho\) be an equivalence relation on \(X\). If \(x\in X\), the equivalence class of \(x\)
(denoted \(\rho(x)\)) is the class of all those elements of \(X\) that are equivalent to \(x\); that is, \(\{y\in X \mid y\rho x\}\). The class of all equivalence classes in \(X\) is denoted \(X/\rho\) and called the quotient class of \(X\) by $\rho$. In this context, it can be proved that for all \(x,y\in X\), several important properties hold: \(x\in \rho\); if \(x\in \rho(y)\) then \(\rho(x)=\rho(y)\); \(\rho(x)=\rho(y)\) if and only if \(x\rho y\); either \(\rho(x)=\rho(y)\) or \(\rho(x)\cap\rho(y)=\emptyset\) \cite[p. 7]{Hungerford}.

Let \(R\) be a nonempty set and \(f\) be a mapping \(f:R^m\to R\), i.e., \(f\) is an \(m\)-ary
operation. Elements of the set \(R\) are denoted by \(a_i,b_i\) where \(i\in \mathbb{Z}^{+}\). A nonempty set \(R\) together with an \(m\)-ary operation is called an \( m \)-groupoid, and it is denoted by \((R,f)\). The following general convention is applied: the sequence \(a_i,a_{i+1},\ldots,a_j\) is denoted by \(a_i^j\); if \(j<i\), then \(a_i^j\) is the empty symbol. The notation \(\underbrace{a,a,\dotsc ,a}_{k}\) is abbreviated as \(a^k\).  Hence, the sequence \(a_1,a_2,\ldots,a_m\), can be expressed \(a_1^m\). For all \(1\leq i < j \leq m\) the term \(f(a_1,a_2,\ldots,a_j,b_{i+1},b_{i+2},\ldots,b_j,c_{j+1},\) \(c_{j+2},\ldots,c_m ) \) is represented as \(f\left(a_1^i,b_{i+1}^j,c_{j+1}^m \right) \). Specifically, if \(b_{i+1}=b_{i+2}=\cdots=b_j=b\), this expression simplifies to \(f\left(a_1^j,b^{j-i},c_{j+1}^m \right) \).
\begin{definition}
	\cite[Definition 1.1]{Chaudhari} Let \(R\) be a nonempty set, \( f:R^{m}\rightarrow R\) is an \( m\)-ary operation. Then
	\begin{enumerate}
		\item Associative law for \( f\) is defined as
		\[
			f\left( a_{1}^{i-1} ,\left( a_{i}^{m+i-1}\right) ,a_{m+i}^{2m-1}\right) =f\left( a_{1}^{j-1} ,f\left( a_{j}^{m+j-1}\right) ,a_{m+j}^{2m-1}\right)
		\]
		for all \( a_{1}^{2m-1} \in R\) and \( 1\leqslant i< j\leqslant m\).
	\item Commutative law for \( f\) is defined as
	\begin{equation*}
		f\left( a_{1}^{m}\right) =f\left( a_{\eta ( 1)} ,a_{\eta ( 2)} ,\dotsc ,a_{\eta ( m)}\right) 
	\end{equation*}
	for every permutation \( \eta \) of \( \{1,2,\dotsc ,m\}\).
	\item An \(m\)-groupoid \((R,f)\) is called \(m\)-semigroup if \(f\) is associative.
	\item An \(m\)-semigroup \((R,f)\) is called \(m\)-commutative semigroup if \(f\) is commutative.
	\item An \( n \)-ary operation \( g:R^{n}\rightarrow R\) is said to be distributive with respect to the \( m \)-ary operation \( f \) if \begin{equation*}
		g\left( b_{1}^{i-1} ,f\left( a_{1}^{m}\right) ,b_{i+1}^{n}\right) =f\left( g\left( b_{1}^{i-1} ,a_{1} ,b{_{i+1}^{n}}\right),\dotsc ,g\left( b_{1}^{i-1} ,a_{m} ,b{_{i+1}^{n}}\right)\right)\end{equation*}
	for all \(a_{1}^{m} ,b_{1}^{i-1} ,b_{i+1}^{n}\in R\) and \(1\leqslant i\leqslant n\).
\end{enumerate}
\end{definition}

\begin{definition} \cite[Definition 1.2]{Chaudhari}
	Let \( R\) be a nonempty set together with \( m \)-ary operation \( f:R^{m}\rightarrow R\) called addition and \( n \)-ary operation \( g:R^{n}\rightarrow R\) called multiplication. The triple \((R,f,g)\) is called \((m,n)\)-semiring if it satisfies the following conditions:
	\begin{enumerate}
		\item \((R,f)\) is an \(m\)-commutative semigroup;
		\item \((R,g)\) is an \(n\)-semigroup;
		\item there exists \(0\in R\) (called zero element of \(R\)) such that
		\begin{enumerate}[label=(\roman*)]
			\item 0 is \(f\)-identity of \(R\) (i.e. \(f\left(a,0^{m-1} \right)=a  \) for every \(a\in R\)),
			\item 0 is \(g\)-zero of \(R\) (i.e. if \(a_1^n\in R\) and \(a_i=0\) for some \(i\) then  \(g\left(a_1^n \right)=0\));
		\end{enumerate}
	\item $ g $ is distributive with respect to $ f $.
	\end{enumerate}
\end{definition}
\subsection{\(\boldsymbol{(m,n)}\)-seminearring} This section introduces the concept of \( (m, n) \)-semi- nearring and the properties obtained.
\begin{definition}
	Let \( R\) be a nonempty set, \( f:R^{m}\rightarrow R\) is an \( m \)-ary operation, and \( g:R^{n}\rightarrow R\) is an \( n \)-ary operation. The \( n\)-ary operation \( g\) is called \( t\)-distributive with respect to \( f\) if
	\begin{equation*}
		g\left( b_{1}^{t-1} ,f\left( a_{1}^{m}\right) ,b_{t+1}^{n}\right) =f\left( g\left( b_{1}^{t-1} ,a_{1} ,b{_{t+1}^{n}}\right) ,\dotsc ,g\left( b_{1}^{t-1} ,a_{m} ,b{_{t+1}^{n}}\right)\right) 
	\end{equation*}
	for all \( a_{1}^{m} ,b_{1}^{t-1} ,b_{t+1}^{n}\in R\). If \( g\) is 1-distributive, then \( g\) is called right distributive with respect to \( f\). If \( g\) is \( n\)-distributive with respect to \( f\), then \( g\) is called distributive with respect to \( f\). Furthermore, if \( g\) is \( t\)-distributive with respect to \( f\) for every \( 1\leqslant t\leqslant n\), then \( g\) is distributive with respect to \( f\). 
\end{definition}
\begin{definition}
		Let \( R\) be a nonempty set together with \( m \)-ary operation \( f:R^{m}\rightarrow R\) called addition and \( n \)-ary operation \( g:R^{n}\rightarrow R\) called multiplication. The triple \((R,f,g)\) is called \(t\)-\((m,n)\)-seminearring if it satisfies the following conditions:
		\begin{enumerate}
			\item \((R,f)\) is an \(m\)-semigroup;
			\item \((R,g)\) is an \(n\)-semigroup;
			\item $ g $ is \(t\)-distributive with respect to $ f $.
		\end{enumerate}
	The triple \( (R, f, g) \) is called right \( (m,n) \)-seminearring if \( (R, f, g) \) is an 1-\( (m,n) \)-seminearring and \( (R, f, g) \) is left \( (m,n) \)-seminearring if \( (R, f, g) \) is an \( n \)-\( (m,n) \)-seminearring.
\end{definition}

\begin{example}\mbox{}\par\nobreak
	\begin{enumerate}
		\item Let \( A\) be a nonempty set and \( 2^{A}\coloneqq\left\lbrace S \middle| S \subseteq A\right\rbrace \). The triple \(\left( 2^{A} ,f,g\right)\) is \( t\)-\( ( m,n)\)-seminearring together with \( m \)-ary operation \( f\left( A_{1}^{m}\right) =\bigcup_{i=1}^{m} A_{i}\) and \( n \)-ary operation \( g\left( B_{1}^{n}\right) =\bigcap_{i=1}^{n} B_{i}\) for all \( A_{1}^{m} ,B_{1}^{m} \in 2^{A}\).
		
		\item Suppose \((R,+,\cdotp)\) is a ring. The triple \((R,f,g)\) is \( t\)-\((m, n)\)-seminearring together with \( m \)-ary operation \( f\left( a_{1}^{m}\right) =\sum_{i=1}^{m} a_{i}\) and \( n \)-ary operation \( g\left( b_{1}^{n}\right) =\prod _{i=1}^{n} b_{i}\) for all \( a_{1}^{m} , b_{1}^{n} \in R\).
		
		\item The \( t\)-\( (m,n)\)-seminearring is not necessarily \( (m,n)\)-semiring. Let us denote by \(T\) the set $ \left\lbrace( a,b)\middle| a,b\in \mathbb{R}\right\rbrace  $. The triple \((T,f,g)\) is \( t\)-\((2,3)\)-seminear- ring together with 2-ary operation \( f(( a_{1} ,b_{1}) ,( a_{2} ,b_{2})) =( a_{1} +a_{2} ,b_{1} +b_{2})\) and 3-ary operation \( g((a_{1} ,b_{1}) ,( a_{2} ,b_{2}) ,( a_{3} ,b_{3})) =( a_{1} a_{2} a_{3} ,b_{1} a_{2} a_{3} +b_{2} a_{3} +b_{3})\), but is not \(( m,n)\)-semiring because \( g\) is not right distributive with respect to \( f\) \cite{Vijayakumar-Bharathi(1)}.
	\end{enumerate}
\end{example}
\begin{definition}
	An \( t\)-\( ( m,n)\)-seminearring \( ( R,f,g)\) is an \( t\)-\( ( m,n)\)-seminearring with absorbing zero if there exists \( 0\in R\) such that 
	\begin{enumerate}
		\item \( 0\) is  \( f\)-identity of \( R\) (i.e. \( f\left( 0^{i-1} ,a,0^{m-i}\right)=a\) for all \( a\in R\) and \( 1\leqslant i\leqslant m\))
		\item 0 is \( g\)-zero (i.e. \( g\left( a_{2}^{i} ,0,a_{i+1}^{n}\right)=0\) for all \( a_{2}^{n}\in R\) and \( 1\leqslant i\leqslant n\))
	\end{enumerate}
\end{definition}

From now on, unless specified otherwise, "\( (m,n) \)-seminearring" means "\(n\)-\( (m,n) \)-seminearring" and it is understood that all theorems about \(n\)-\( (m,n) \)-seminearring also hold, mutatis mutandis, for \( t \)-\( (m,n) \)-seminearring for all \(1\leqslant t < n\).
 
\subsection{\(\boldsymbol{(m,n)}\)-subseminearring} This section introduces the concept of \( (m, n) \)-subseminearring and the properties obtained.
\begin{definition}
	Let $ (R, f, g) $ be an $ (m, n) $-seminearring and $ S $ is a nonempty subset of $ R $. The triple $ (S, f, g) $ is called an $ (m, n) $-subseminearring of $ (R, f, g) $ if $ (S, f, g) $ is an $ (m, n) $-seminearring. 
\end{definition}
\begin{lemma}\label{lemma-subseminearring}
	Let \((R,f,g)\) be an \((m,n)\)-seminearring, \(S\subseteq R\), and \(S\neq \emptyset\). Then \((S,f,g)\) is an \((m,n)\)-subseminearring of \((R,f,g)\) if and only if the following two conditions are 
	satisfied. 
	\begin{enumerate}
		\item If \(a_1^m\in S\) then \(f\left( a_1^m\right) \in S\).
		\item If \(b_1^n\in S\) then \(g\left( b_1^n\right) \in S\).
	\end{enumerate}
\end{lemma}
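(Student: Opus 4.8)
The plan is to argue both implications directly from the definition of an \((m,n)\)-subseminearring together with the definition of a \(t\)-\((m,n)\)-seminearring, keeping in mind the standing convention that ``\((m,n)\)-seminearring'' abbreviates ``\(n\)-\((m,n)\)-seminearring.'' Throughout I use implicitly that the operations on the substructure are the restrictions of \(f\) and \(g\).

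For the forward direction, suppose \((S,f,g)\) is an \((m,n)\)-subseminearring. Then by definition \((S,f,g)\) is itself an \((m,n)\)-seminearring, so the restriction of \(f\) must be an \(m\)-ary operation \emph{on \(S\)} and the restriction of \(g\) an \(n\)-ary operation on \(S\); that is, \(f(a_1^m)\in S\) whenever \(a_1^m\in S\) and \(g(b_1^n)\in S\) whenever \(b_1^n\in S\). These are precisely conditions (1) and (2).

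For the converse, assume (1) and (2) and verify the three axioms of a \(t\)-\((m,n)\)-seminearring (with \(t=n\)) for \((S,f,g)\). First, \((S,f)\) is an \(m\)-semigroup: condition (1) makes \((S,f)\) an \(m\)-groupoid, and for associativity, given \(a_1^{2m-1}\in S\) and \(1\leqslant i<j\leqslant m\), condition (1) applied to the inner blocks shows \(f(a_i^{m+i-1})\in S\) and \(f(a_j^{m+j-1})\in S\), so both sides of the associative law are well-defined elements of \(S\); since that identity holds for all elements of \(R\), it holds in particular here. Second, \((S,g)\) is an \(n\)-semigroup by the identical argument with \(g\) and (2) in place of \(f\) and (1). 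Third, \(g\) is \(n\)-distributive with respect to \(f\) on \(S\): given \(a_1^m,b_1^{n-1}\in S\), condition (1) gives \(f(a_1^m)\in S\), condition (2) gives \(g(b_1^{n-1},a_k)\in S\) for each \(1\leqslant k\leqslant m\), and condition (1) again gives \(f\bigl(g(b_1^{n-1},a_1),\dotsc,g(b_1^{n-1},a_m)\bigr)\in S\); as the \(n\)-distributive identity holds in \(R\) for all arguments, it holds for these. Combining the three points, \((S,f,g)\) is an \((m,n)\)-seminearring, hence an \((m,n)\)-subseminearring of \((R,f,g)\).

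The verification above is routine; the only point requiring care --- and thus the main obstacle --- is bookkeeping: for each axiom one must check that every nested subterm generated by \(f\) or \(g\) lands back in \(S\), so that the compound expressions are genuinely defined over \(S\). Once closure is invoked at each level of nesting, every required equation is just the restriction of an identity already known to hold throughout \(R\). By the stated convention, the same argument applies \emph{mutatis mutandis} to \(t\)-\((m,n)\)-seminearrings for \(1\leqslant t<n\), with \(t\)-distributivity replacing \(n\)-distributivity in the third point.
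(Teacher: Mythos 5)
Your proposal is correct and follows essentially the same route as the paper: the forward direction is immediate from the definition, and the converse combines the closure conditions with the fact that associativity of \(f\) and \(g\) and the \(n\)-distributive identity are inherited from \(R\) by restriction. The paper's own proof is just a terser version of this; your extra bookkeeping about nested subterms landing in \(S\) is a reasonable elaboration, not a different argument.
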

\begin{proof}
	\(( \Rightarrow )\) They are easily seen to be smooth. \( ( \Leftarrow )\) By part (1) and (2), \( S\) is closed under \( f\) and \( g\). Since \( S\subseteq R\) and \( ( R,f,g)\) is an \( ( m,n)\)-seminearring, \( f\) and \( g\) are associative, and \( g\) is left distributive with respect to \( f\). Thus, \( ( S,f,g)\) is an \( ( m,n)\)-seminearring. Therefore, \( ( S,f,g)\) is an \( ( m,n)\)-subseminearring of \( ( R,f,g)\).
\end{proof}
\begin{theorem}
	Let \(\Delta\) be an index set, \( ( R,f,g)\) is an \( ( m,n)\)-seminearring, and \( \{( S_{\alpha } ,f,g) | \alpha \in \Delta \}\) is a collection of \( ( m,n)\)-subseminearrings of \( ( R,f,g)\) where \( S\coloneqq\bigcap _{\alpha \in \Delta } S_{\alpha } \neq \emptyset \). Then \( \left(S,f,g\right)\) is an \( ( m,n)\)-subseminearring of \( ( R,f,g)\).
\end{theorem}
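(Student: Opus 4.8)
The plan is to invoke Lemma~\ref{lemma-subseminearring}, which says that for a nonempty subset $S$ of an $(m,n)$-seminearring $(R,f,g)$, being an $(m,n)$-subseminearring is equivalent to closure under both $f$ and $g$. Since $S = \bigcap_{\alpha\in\Delta} S_\alpha$ is nonempty by hypothesis and is obviously a subset of $R$, it suffices to verify these two closure conditions and then quote the lemma.

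First I would check closure under $f$. Let $a_1^m \in S$. Then $a_1^m \in S_\alpha$ for every $\alpha \in \Delta$. Since each $(S_\alpha, f, g)$ is an $(m,n)$-subseminearring of $(R,f,g)$, Lemma~\ref{lemma-subseminearring} gives $f\!\left(a_1^m\right) \in S_\alpha$ for every $\alpha$. Hence $f\!\left(a_1^m\right) \in \bigcap_{\alpha\in\Delta} S_\alpha = S$. The argument for $g$ is identical: given $b_1^n \in S$, we have $b_1^n \in S_\alpha$ for all $\alpha$; each $S_\alpha$ is closed under $g$ by Lemma~\ref{lemma-subseminearring}, so $g\!\left(b_1^n\right) \in S_\alpha$ for all $\alpha$, and therefore $g\!\left(b_1^n\right) \in S$. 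With both conditions verified, Lemma~\ref{lemma-subseminearring} yields that $(S,f,g)$ is an $(m,n)$-subseminearring of $(R,f,g)$.

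There is no genuine obstacle here; this is the standard "an intersection of substructures is a substructure" argument, and the real content is offloaded onto Lemma~\ref{lemma-subseminearring}, which lets us avoid re-checking associativity of $f$ and $g$ and left distributivity of $g$ with respect to $f$ (these are inherited from $R$ on the whole of $S$ automatically). The one point that must not be glossed over is the nonemptiness of $S$: this has to be assumed (as it is in the statement), or otherwise guaranteed, since an arbitrary intersection of subsets may be empty and the definition of an $(m,n)$-seminearring requires a nonempty carrier set.
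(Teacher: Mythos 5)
Your proof is correct and follows essentially the same route as the paper: verify that $S$ is closed under $f$ and under $g$ using the closure of each $S_\alpha$, then invoke Lemma~\ref{lemma-subseminearring}. Your added remark about the necessity of the nonemptiness hypothesis is a sensible observation but does not change the argument.
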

\begin{proof}
	Suppose that  \( a_{1}^{m} ,b_{1}^{n} \in S\). Then \( a_{1}^{m} ,b_{1}^{n} \in S_{\alpha }\) for all \( \alpha \in \Delta\). Since \( S_{\alpha }\) is closed under \( f\) and \( g\), \( f\left( a_{1}^{m}\right) ,g\left( b_{1}^{n}\right) \in S_{\alpha }\) for all \( \alpha \in \Delta\). Hence, \( f\left( a_{1}^{m}\right) ,g\left( b_{1}^{n}\right) \in S\). By Lemma \ref{lemma-subseminearring}, \( \left(S ,f,g\right)\) is an \((m,n)\)-subseminearring of \((R,f,g)\).
\end{proof}
\subsection{\(\boldsymbol{(m,n)}\)-ideal} This section will describe the \((m, n)\)-ideal of \( (m, n) \)-seminearring and some properties obtained.
\begin{definition}
Let \(( R,f,g)\) be an \(( m,n)\)-seminearring. An \(   ( m,n)\)-subseminearring \(   ( I,f,g)\) of an \(   ( m,n)\)-seminearring \(   ( R,f,g)\) is an \(   i\)-\(   ( m,n)\)-ideal provided 
\begin{equation*}
	b_{1}^{i-1} ,b_{i+1}^{n} \in R\ \ \  \text{and} \ \ \ \  x\in I \ \ \ \ \ \Longrightarrow \ \ \ \ \ \ g\left( b_{1}^{i-1} ,x,b_{i+1}^{n}\right) \in I
\end{equation*}
The triple \(   ( I,f,g)\) is called a left \(   ( m,n)\)-ideal if \(   ( I,f,g)\) is \(   1\)-\(   ( m,n)\)-ideal. The triple \(   ( I,f,g)\) is called a right \(   ( m,n)\)-ideal if \(   ( I,f,g)\) is an \(   n\)-\(   ( m,n)\)-ideal. Furthermore, \(   ( I,f,g)\) is called \(   ( m,n)\)-ideal if \(   ( I,f,g)\) is \(   i\)-\(   ( m,n)\)-ideal for every \(   1\leqslant i\leqslant n\).
\end{definition}
\begin{lemma}\label{lemma-ideals}
	Let \( (R,f,g)\) be an \((m,n)\)-seminearring, \( I\subseteq R\), and \( I\neq \emptyset\). The triple \((I,f,g)\) is an \((m,n)\)-ideal of \((R,f,g)\) if and only if
	\begin{enumerate}
		\item \( f\left( a_{1}^{m}\right) \in I\) for all \( a_{1}^{m}\in I\) and
		\item \( g\left( b_{1}^{i-1} ,x,b_{i+1}^{n}\right) \in I\) for all \( b_{1}^{i-1} ,b_{i+1}^{n} \in R\), \( x\in I\), and \( 1\leqslant i\leqslant n\).
	\end{enumerate}
\end{lemma}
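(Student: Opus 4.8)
The plan is to mirror the proof of Lemma~\ref{lemma-subseminearring}, reducing the statement to the definition of $(m,n)$-ideal together with the subseminearring criterion already established.

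For the forward implication, I would assume $(I,f,g)$ is an $(m,n)$-ideal of $(R,f,g)$. By definition this is in particular an $(m,n)$-subseminearring, so Lemma~\ref{lemma-subseminearring}(1) gives condition~(1): $f\left(a_1^m\right)\in I$ whenever $a_1^m\in I$. Condition~(2) is then immediate, since being an $(m,n)$-ideal means being an $i$-$(m,n)$-ideal for every $1\leqslant i\leqslant n$, which is precisely the implication $b_1^{i-1},b_{i+1}^n\in R$, $x\in I\Rightarrow g\left(b_1^{i-1},x,b_{i+1}^n\right)\in I$.

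For the reverse implication, I would assume (1) and (2) hold. The first step is to observe that $I$ is closed under $g$: given $b_1^n\in I$, since $I\subseteq R$ we may apply (2) with $i=1$, $x=b_1$, and the remaining entries $b_2^n\in R$, obtaining $g\left(b_1^n\right)\in I$. Together with (1), this shows $I$ is closed under both $f$ and $g$, so Lemma~\ref{lemma-subseminearring} yields that $(I,f,g)$ is an $(m,n)$-subseminearring of $(R,f,g)$. Finally, condition~(2) says exactly that $(I,f,g)$ is an $i$-$(m,n)$-ideal for each $1\leqslant i\leqslant n$, hence an $(m,n)$-ideal of $(R,f,g)$.

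There is no substantive obstacle here; the only point requiring a moment's care is the reduction in the reverse direction — one must invoke the inclusion $I\subseteq R$ to legitimately feed elements of $I$ into the outer ``$R$-slots'' of $g$ so that closure under $g$ follows from the ideal-type condition~(2), and only then is Lemma~\ref{lemma-subseminearring} applicable.
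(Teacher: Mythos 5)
Your proposal is correct and follows essentially the same route as the paper: the forward direction is immediate from the definitions, and the reverse direction derives closure of $I$ under $g$ by feeding $b_1\in I$ into condition~(2) with the remaining entries viewed as elements of $R$ via $I\subseteq R$, then invokes Lemma~\ref{lemma-subseminearring} to get the subseminearring structure before reading off the ideal property from~(2). The paper does exactly this (choosing an arbitrary slot $i$ rather than $i=1$), so there is nothing further to add.
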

\begin{proof}
	\(( \Rightarrow )\) The proof is standard. \(( \Leftarrow )\). Let \( a_{1}^{m} ,b_{1}^{n} \in I\). By part (1), \( f\left( a_{1}^{m}\right) \in I\). Since \( b_{1}^{i-1} ,b_{i+1}^{n}\in R\) and \( b_{i}\in I\) for some \( i\), by part (2), \( g\left( b_{1}^{n}\right) =g\left( b_{1}^{i-1} ,b_{i} ,b_{i+1}^{n}\right) \in I\). Thus, \((I,f,g)\) is an \((m,n)\)-subseminearring of \((R,f,g)\). 
\end{proof}
\begin{theorem}
	Let \(\Delta\) be an index set, \(( R,f,g)\) is an \(( m,n)\)-seminearring, and \(\left\lbrace ( I_{\alpha } ,f,g) \middle| \alpha \in \Delta \right\rbrace \) is a collection of \(( m,n)\)-ideals of \(( R,f,g)\) where \(I\coloneqq \bigcap_{\alpha \in \Delta } I_{\alpha } \neq \emptyset\). Then \(\left( I,f,g\right)\) is an \(( m,n)\)-ideal of \(( R,f,g)\).
\end{theorem}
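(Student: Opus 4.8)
The plan is to mirror the argument used for the intersection of \((m,n)\)-subseminearrings, but to invoke Lemma~\ref{lemma-ideals} in place of Lemma~\ref{lemma-subseminearring}. Since \(I\neq\emptyset\) is assumed, it suffices to verify the two closure conditions in Lemma~\ref{lemma-ideals}: closure under \(f\), and absorption of \(g\) in every slot.

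First I would check closure under \(f\). Suppose \(a_{1}^{m}\in I\). Then \(a_{1}^{m}\in I_{\alpha}\) for every \(\alpha\in\Delta\), and since each \((I_{\alpha},f,g)\) is an \((m,n)\)-ideal, Lemma~\ref{lemma-ideals}(1) gives \(f\left(a_{1}^{m}\right)\in I_{\alpha}\) for every \(\alpha\). Hence \(f\left(a_{1}^{m}\right)\in\bigcap_{\alpha\in\Delta}I_{\alpha}=I\).

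Next I would check the absorption condition. Fix \(1\leqslant i\leqslant n\), and let \(b_{1}^{i-1},b_{i+1}^{n}\in R\) and \(x\in I\). Then \(x\in I_{\alpha}\) for every \(\alpha\in\Delta\), so by Lemma~\ref{lemma-ideals}(2) applied in \(I_{\alpha}\) we get \(g\left(b_{1}^{i-1},x,b_{i+1}^{n}\right)\in I_{\alpha}\) for every \(\alpha\). Therefore \(g\left(b_{1}^{i-1},x,b_{i+1}^{n}\right)\in I\), and since \(i\) was arbitrary this holds for all \(1\leqslant i\leqslant n\). By Lemma~\ref{lemma-ideals}, \((I,f,g)\) is an \((m,n)\)-ideal of \((R,f,g)\).

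There is no real obstacle here: the only point requiring care is that the hypothesis \(I\neq\emptyset\) is genuinely needed (so that \((I,f,g)\) qualifies as an \((m,n)\)-subseminearring in the first place), and that the characterization in Lemma~\ref{lemma-ideals} reduces the claim to two membership checks that pass through the intersection slot by slot. Everything else is a direct transcription of the corresponding proof for subseminearrings.
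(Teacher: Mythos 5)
Your proposal is correct and follows essentially the same route as the paper: both arguments pass the two membership conditions of Lemma~\ref{lemma-ideals} through each \(I_{\alpha}\) and then through the intersection, concluding via that lemma. No gaps.
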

\begin{proof}
	If \( a_{1}^{m} ,x\in I\) and \( b_{1}^{i-1} ,b_{i+1}^{n} \in R\) \((1\leqslant i\leqslant n)\) then \( a_{1}^{m} , x\in I_{\alpha }\) for all \( \alpha \in \Delta\) so that \( f\left( a_{1}^{m}\right) \in I_{\alpha }\) and \( g\left( b_{1}^{i-1},x,b_{i+1}^{n}\right) \in I_{\alpha }\) for all \( \alpha \in \Delta\). Hence  \( \)\( f\left( a_{1}^{m}\right) ,g\left( b_{1}^{i-1} ,x,b_{i+1}^{n}\right) \in I\) and \( \left(I ,f,g\right)\) is an \((m,n)\)-ideal of \( (R,f,g)\) by Lemma \ref{lemma-ideals}.
\end{proof}

\subsection{\(\boldsymbol{(m,n)}\)-seminearring with unity} This section will give some exciting properties of \( (m, n) \)-seminearring with unity.
\begin{definition}
	An \( (m,n) \)-seminearring \( (R,f,g) \) is called an \( (m,n) \)-seminearring with unity (or \( g \)-identity) if there exists \(1\in R\) such that 
	\begin{equation*}
		g\left( 1^{i-1} ,a,1^{n-i}\right) =a
	\end{equation*}
	for all \(a\in R\) and \(1 \leqslant i \leqslant n\).
\end{definition}

\begin{theorem}
	Let \((R,f,g)\) be an \((m,n)\)-seminearring with unity. Then \[g\left(a_2^i,1,a_{i+1}^n \right)=g\left(a_2^j,1,a_{j+1}^n \right)  \] for all \(a_2^n\in R\) and \(1\leqslant i < j \leqslant n\).
\end{theorem}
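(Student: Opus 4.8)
The plan is to reduce the statement to a single step that slides the unity $1$ one coordinate to the right, and then iterate it. Precisely, it suffices to prove
\[
	g\left(a_2^i,1,a_{i+1}^n\right)=g\left(a_2^{i+1},1,a_{i+2}^n\right)
\]
for all $a_2,\ldots,a_n\in R$ and all $1\leqslant i\leqslant n-1$; granting this, for $1\leqslant i<j\leqslant n$ one simply chains
\[
	g\left(a_2^i,1,a_{i+1}^n\right)=g\left(a_2^{i+1},1,a_{i+2}^n\right)=\cdots=g\left(a_2^j,1,a_{j+1}^n\right),
\]
which is a finite induction on $j-i$ (each displayed term has exactly $n$ arguments, so every step is legitimate). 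Observe that only the fact that $(R,g)$ is an $n$-semigroup and the defining property of the unity $1$ are needed; $f$ and distributivity play no role.

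For the one-step claim I would use the defining property of $1$ twice and the $n$-ary associative law once. Since $a_{i+1}$ lies immediately to the right of the $1$, rewrite it, via $g\left(a,1^{n-1}\right)=a$, as $a_{i+1}=g\left(a_{i+1},1^{n-1}\right)$, so that
\[
	g\left(a_2^i,1,a_{i+1}^n\right)=g\left(a_2^i,1,g\left(a_{i+1},1^{n-1}\right),a_{i+2}^n\right).
\]
After flattening the inner application, this term and the term $g\left(a_2^i,g\left(1,a_{i+1},1^{n-2}\right),1,a_{i+2}^n\right)$ are two re-bracketings of one and the same length-$(2n-1)$ sequence $a_2,\ldots,a_i,1,a_{i+1},1^{n-1},a_{i+2},\ldots,a_n$, differing only in which window of $n$ consecutive entries the inner $g$ collapses (the window starting at the slot of $a_{i+1}$ in the first term, one slot earlier in the second). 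Since $(R,g)$ is an $n$-semigroup, the associative law — applied to those two window positions, which are $i$ and $i+1$ and hence both lie in $\{1,\ldots,n\}$ because $1\leqslant i\leqslant n-1$ — identifies the two terms. Collapsing the inner block by $g\left(1,a,1^{n-2}\right)=a$ then gives
\[
	g\left(a_2^i,1,a_{i+1}^n\right)=g\left(a_2^i,a_{i+1},1,a_{i+2}^n\right)=g\left(a_2^{i+1},1,a_{i+2}^n\right),
\]
which is the one-step claim.

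The only point needing attention is the index bookkeeping: one verifies that the outer $g$ in $g\left(a_2^i,1,g\left(a_{i+1},1^{n-1}\right),a_{i+2}^n\right)$ receives exactly $n$ inputs (indeed $(i-1)+1+1+(n-i-1)=n$), that flattening yields exactly $2n-1$ entries, and that in each bracketing the distinguished block of length $n$ really is $a_{i+1}$ flanked by the correct number of copies of $1$, so that the identity law collapses it as used. There is no deeper obstacle; in particular only a single instance of the associative law is invoked, so the full generalized associativity theorem for $n$-semigroups is not needed, and the only genuine hazard is losing track of the shifting subscripts as $1$ passes $a_{i+1}$.
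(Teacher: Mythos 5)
Your proof is correct and rests on the same mechanism as the paper's: rewrite $a_k$ as $g\left(a_k,1^{n-1}\right)$ and use $n$-ary associativity to slide the unity one slot to the right past it. The only organizational difference is that you isolate the single-slot shift $j=i+1$ (so each step is one literal instance of the stated associative law) and iterate, whereas the paper performs the shift past all of $a_{i+1},\dotsc,a_j$ in one simultaneous re-bracketing; your version is, if anything, slightly more careful on that point.
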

\begin{proof}
	Suppose that  \(a_2^n\in R\) and \(1\leqslant i < j \leqslant n\). Then
	\begin{align*}
		g\left( a_{2}^{ i} ,1,a_{ i+1}^{n}\right) & =g\left( a_{2}^{ i} ,1,a_{ i+1}^{ j} ,a_{ j+1}^{n}\right)\\
		& =g\left( a_{2}^{ i} ,1,g\left( a_{ i+1} ,1^{n-1}\right) ,g\left( a_{ i+2} ,1^{n-1}\right) ,\dotsc ,g\left( a_{ j} ,1^{n-1}\right) ,a_{ j+1}^{n}\right)\\
		& =g\left( a_{2}^{ i} ,g\left( 1,a_{ i+1} ,1^{n-2}\right) ,g\left( 1,a_{ i+2} ,1^{n-2}\right) ,\dotsc ,g\left( 1,a_{ j} ,1^{n-2}\right) ,1,a_{ j+1}^{n}\right)\\
		& =g\left( a_{2}^{ i} ,a_{ i+1} ,a_{ i+2} ,\dotsc ,a_{ j} ,1,a_{ j+1}^{n}\right)\\
		& =g\left( a_{2}^{ j} ,1,a_{ j+1}^{n}\right),
	\end{align*}
and the proof is complete. 
\end{proof}
\begin{theorem}
	Let \((R,f,g)\) be an \((m,n)\)-seminearring with unity \(1\in R\). If \((I,f,g)\) is an \((m,n)\)-ideal of \((R,f,g)\) and \(1\in I\), then \(I=R\).
\end{theorem}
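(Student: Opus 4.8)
The plan is to show that every element of $R$ lies in $I$ by exhibiting it as a value of $g$ with one argument equal to $1$ and the rest equal to elements of $I$. First I would take an arbitrary $a\in R$. Since $(R,f,g)$ is an $(m,n)$-seminearring with unity, the $g$-identity property gives $a=g\left(1^{n-1},a\right)$ (using the case $i=n$ in the definition of unity). Now the hypothesis $1\in I$ means that $n-1$ of the $n$ arguments of $g$ in this expression lie in $I$; in particular at least one argument lies in $I$.

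The key step is then to invoke the ideal property of $(I,f,g)$. By Lemma \ref{lemma-ideals}, part (2), whenever $b_1^{i-1},b_{i+1}^n\in R$, $x\in I$, and $1\leqslant i\leqslant n$, we have $g\left(b_1^{i-1},x,b_{i+1}^n\right)\in I$. Applying this with $i=1$, $x=1\in I$, and $b_2^n=1^{n-2},a$ (all of which lie in $R$), we obtain $g\left(1,1^{n-2},a\right)=g\left(1^{n-1},a\right)\in I$. Since $g\left(1^{n-1},a\right)=a$, this yields $a\in I$. As $a\in R$ was arbitrary, $R\subseteq I$; combined with $I\subseteq R$ (since $(I,f,g)$ is an $(m,n)$-subseminearring of $(R,f,g)$), we conclude $I=R$.

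I do not expect any serious obstacle here: the argument is a direct two-line consequence of the unity axiom and the ideal-absorption property, and the only thing to be a little careful about is matching the index conventions so that the placement of $1$ as the $x$-slot and the remaining $1$'s and $a$ in the $b$-slots is legitimate (which it is, since $R$ contains $1$ and $a$ and $I$ is an ideal for every $1\leqslant i\leqslant n$, so we are free to pick whichever slot is most convenient). If one prefers to avoid even this bookkeeping, one can instead use $a=g\left(a,1^{n-1}\right)$ and absorb on the slot containing a $1$; either way the conclusion $I=R$ follows immediately.
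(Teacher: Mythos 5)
Your proof is correct and follows essentially the same route as the paper: write an arbitrary $a\in R$ as a $g$-product in which one slot holds $1\in I$ and the rest hold elements of $R$, then apply the ideal-absorption property; the paper simply uses the decomposition $a=g\left(a,1^{n-1}\right)$ (your own suggested variant) rather than $a=g\left(1^{n-1},a\right)$. Both placements are legitimate since $I$ is an $i$-$(m,n)$-ideal for every $1\leqslant i\leqslant n$, so there is nothing further to add.
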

\begin{proof}
	Clearly, $I \subseteq R$. Suppose \(a\in R\) and \(1\in I\). Then \(a=g\left(a,1^{n-1} \right)\in I \), so that \(R\subseteq I\).
\end{proof}
\begin{definition}
	Let \((R,f,g)\) be an \((m,n)\)-seminearring with unity. An element \(b\in R\) said to be invertible if there exists \(a\in R\) such that  \[g\left(a,b,1^{n-2} \right)=g\left(b,a,1^{n-2} \right)=1.  \]
	The element \( b\) is called \( g\)-inverse of \( a\) and is denoted \( a^{-1}\). The set of all invertible elements in \( R \) is denoted by \(\mathcal{I}(R) \).
\end{definition}
\begin{theorem}\label{xx-1}
	Let \((R,f,g)\) be an \((m,n)\)-seminearring with unity. If \( x\in \mathcal{I}(R)\), then 
	\[
	g\left( x,x^{-1},a_{3}^{n}\right) =g\left( x^{-1},x,a_{3}^{n}\right) =g\left( a_{3}^{n} ,x,x^{-1}\right) =g\left( a_{3}^{n} ,x^{-1},x\right)=g\left( a_{3}^{n} ,1^{2}\right)
	\]
	for all \( a_{3}^{n} \in R\).
\end{theorem}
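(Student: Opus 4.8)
The plan is to reduce each of the five displayed products to $g(a_3^n,1^2)$, using nothing beyond the associativity of $g$, the unity axiom $g(1^{i-1},a,1^{n-i})=a$, and the two relations $g(x,x^{-1},1^{n-2})=g(x^{-1},x,1^{n-2})=1$ coming from invertibility of $x$; the operation $f$, the integer $m$, and distributivity are not needed. I would begin with the remark that $x^{-1}\in\mathcal{I}(R)$ and $(x^{-1})^{-1}=x$, because those same two relations, read in the opposite order, present $x$ as a $g$-inverse of $x^{-1}$. Hence the ``left'' identities $g(x^{-1},x,a_3^n)$ and $g(a_3^n,x^{-1},x)$ are obtained from the ``right'' ones by substituting $x^{-1}$ for $x$, and it suffices to establish $g(x,x^{-1},a_3^n)=g(a_3^n,1^2)$ and $g(a_3^n,x,x^{-1})=g(a_3^n,1^2)$.

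For both of these I would run the same manoeuvre. Using the unity axiom, enlarge the displayed occurrence of $x^{-1}$ to $g(x^{-1},1^{n-1})$; this produces a nested $g$-term whose $2n-1$ flat entries are $x,x^{-1},1,\dots,1,a_3,\dots,a_n$ in the first case and $a_3,\dots,a_n,x,x^{-1},1,\dots,1$ in the second. Now reassociate: in each case the inner $n$-ary bracket can be slid one position to the left so that it becomes exactly $g(x,x^{-1},1^{n-2})$, which equals $1$. What remains is $g(1,1,a_3^n)$ in the first case and $g(a_3^n,1,1)=g(a_3^n,1^2)$ in the second. It then only remains to see $g(1,1,a_3^n)=g(a_3^n,1^2)$, i.e.\ that the two copies of $1$ may be moved from the front of the product to the back past $a_3,\dots,a_n$; but sliding a single $1$ through a product is exactly the preceding theorem, so two applications give $g(1,1,a_3^n)=g(1,a_3^n,1)=g(a_3^n,1^2)$. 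Chaining the steps, $g(x,x^{-1},a_3^n)=g(1,1,a_3^n)=g(a_3^n,1^2)$; the identical computation with $x^{-1}$ in place of $x$ yields $g(x^{-1},x,a_3^n)=g(a_3^n,1^2)$; and the two right-hand equalities were already obtained.

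The step I expect to need the most care is the reassociation: one has to write out the $2n-1$ flat entries explicitly and check that the length-$n$ window $x,x^{-1},1,\dots,1$ we wish to bracket really begins at one of the positions $1,\dots,n$ allowed in the associative law for $g$, and that after bracketing the other $n-1$ outer arguments are a single $1$ together with $a_3,\dots,a_n$ in the correct order. The small cases are harmless: for $n=2$ the tuple $a_3^n$ is empty and the claim is just $g(x,x^{-1})=g(x^{-1},x)=g(1,1)=1$, which is immediate from the definitions, while for $n=3$ one only needs to confirm that no index leaves its stated range.
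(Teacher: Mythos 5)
Your proof is correct and follows essentially the same route as the paper: expand $x^{-1}$ to $g(x^{-1},1^{n-1})$ via the unity axiom, reassociate so the inverse pair collapses to $1$, and shuttle the resulting $1$'s past $a_3^n$ by two applications of the preceding theorem. The only cosmetic difference is that you obtain the two ``$x^{-1}$ first'' identities by invoking the symmetry $(x^{-1})^{-1}=x$, whereas the paper re-runs the expand--swap--contract computation directly; both are fine.
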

\begin{proof}
	Suppose that  \(a_3^n\in R\) and \( x\in \mathcal{I}(R)\). Then\begin{align*}
		g\left( x,x^{-1} ,a_{3}^{n}\right) & =g\left( x,g\left( x^{-1} ,1^{n-1}\right) ,a_{3}^{n}\right)\\
		& =g\left( g\left( x,x^{-1} ,1^{n-2}\right) ,1,a_{3}^{n}\right)\\
		& =g\left( g\left( x^{-1} ,x,1^{n-2}\right) ,1,a_{3}^{n}\right)\\
		& =g\left( x^{-1} ,g\left( x,1^{n-1}\right) ,a_{3}^{n}\right)\\
		& =g\left( x^{-1} ,x,a_{3}^{n}\right)
	\end{align*}
and
\begin{align*}
	g\left( x,x^{-1} ,a_{3}^{n}\right) & =g\left( x,g\left( x^{-1} ,1^{n-1}\right) ,a_{3}^{n}\right)\\
	& =g\left( g\left( x,x^{-1} ,1^{n-2}\right) ,1,a_{3}^{n}\right)\\
	& =g\left( 1,1,a_{3}^{n}\right)\\
	& =g\left( a_{3}^{n} ,1,1\right)\\
	& =g\left( a_{3}^{n} ,g\left( x,x^{-1} ,1^{n-2}\right) ,1\right)\\
	& =g\left( a_{3}^{n} ,x,g\left( x^{-1} ,1^{n-1}\right)\right)\\
	& =g\left( a_{3}^{n} ,x,x^{-1}\right).
\end{align*}
Similarly, one shows that \(g\left( a_{3}^{n} ,x,x^{-1}\right) =g\left( a_{3}^{n} ,x^{-1} ,x\right)\). Furthermore, note that
\begin{align*}
	g\left( a_{3}^{n} ,x^{-1} ,x\right) & =g\left( a_{3}^{n} ,x^{-1} ,g\left( x,1^{n-1}\right)\right)\\
	& =g\left( a_{3}^{n} ,g\left( x^{-1} ,x,1^{n-2}\right) ,1\right)\\
	& =g\left( a_{3}^{n} ,1^{2}\right).
\end{align*}
This completes the proof of Theorem \ref{xx-1}.
\end{proof}

\begin{theorem}
	Let \((R,f,g)\) be an \((m,n)\)-seminearring with unity. If \(a_1^n\in \mathcal{I}(R)\), then \(g\left(a_1^n \right)\in \mathcal{I}(R) \). 
\end{theorem}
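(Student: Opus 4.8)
The plan is to reduce everything to the two‑factor case and then iterate. The key lemma is that if \(x,y\in\mathcal{I}(R)\) then \(g\!\left(x,y,1^{n-2}\right)\in\mathcal{I}(R)\), with \(g\)-inverse \(g\!\left(y^{-1},x^{-1},1^{n-2}\right)\) (the ``reversed product''); granting this, a short downward induction on the length of the product builds \(g(a_1^n)\) out of two‑element products and concludes. Throughout I would only form \(g\)-products of \(n\) or \(2n-1\) elements, so that the associative law for \(g\) as stated in the excerpt applies verbatim and no general associativity theorem is needed; I would also freely use the theorem that a single \(1\) may be slid to any slot of a \(g\)-product, the unity axiom, and Theorem~\ref{xx-1}.

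\emph{Step 1 (two factors).} Put \(v=g\!\left(y^{-1},x^{-1},1^{n-2}\right)\). The expression \(g\!\left(g(x,y,1^{n-2}),v,1^{n-2}\right)\) is a \(g\)-product of the \(2n-1\) elements \(x,y,1^{n-2},v,1^{n-2}\), so by associativity it equals \(g\!\left(x,\,g(y,1^{n-2},v),\,1^{n-2}\right)\). Sliding the \(n-2\) interior ones past \(v\) turns the middle factor into \(g\!\left(y,v,1^{n-2}\right)=g\!\left(y,g(y^{-1},x^{-1},1^{n-2}),1^{n-2}\right)\); re‑expanding and regrouping (again a \((2n-1)\)-element product) rewrites this as \(g\!\left(g(y,y^{-1},x^{-1},1^{n-3}),\,1^{n-1}\right)\), and Theorem~\ref{xx-1} collapses \(g(y,y^{-1},x^{-1},1^{n-3})\) to \(g(x^{-1},1^{n-1})=x^{-1}\). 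Hence the whole expression reduces to \(g(x,x^{-1},1^{n-2})=1\) by the definition of \(g\)-inverse. Since the defining relation for a \(g\)-inverse is symmetric, \(a\mapsto a^{-1}\) is an involution, so running the same computation with \((y^{-1},x^{-1})\) in place of \((x,y)\) gives the other one‑sided identity \(g\!\left(v,g(x,y,1^{n-2}),1^{n-2}\right)=1\).

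\emph{Step 2 (peeling identity and induction).} For \(1\le k\le n-1\) I would record the identity
\[
g\!\left(a_k,a_{k+1},\dots,a_n,1^{k-1}\right)=g\!\left(a_k,\;g\!\left(a_{k+1},\dots,a_n,1^{k}\right),\;1^{n-2}\right),
\]
which comes from associativity by regarding both sides as bracketings of the single \((2n-1)\)-letter word \(a_k,a_{k+1},\dots,a_n,1^{\,k+n-2}\) (on the left, first absorb a trailing block of \(n-1\) ones via the unity axiom). Now run a downward induction on \(k\) from \(n\) to \(1\) on the statement ``\(g\!\left(a_k,\dots,a_n,1^{k-1}\right)\in\mathcal{I}(R)\)''. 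The base case \(k=n\) is \(g(a_n,1^{n-1})=a_n\in\mathcal{I}(R)\). For the step, the peeling identity exhibits \(g\!\left(a_k,\dots,a_n,1^{k-1}\right)\) as \(g(a_k,z,1^{n-2})\) with \(z=g\!\left(a_{k+1},\dots,a_n,1^{k}\right)\in\mathcal{I}(R)\) by the induction hypothesis and \(a_k\in\mathcal{I}(R)\) by assumption, so Step~1 yields \(g\!\left(a_k,\dots,a_n,1^{k-1}\right)\in\mathcal{I}(R)\). The case \(k=1\) is exactly \(g(a_1^n)\in\mathcal{I}(R)\).

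The underlying ideas are elementary; the real obstacle is bookkeeping — keeping every intermediate \(g\)-product of length exactly \(2n-1\) so the stated associative law applies directly, tracking the number of padding ones at each stage, and checking that each sliding of ones is a legitimate instance of the relevant theorem. The one genuinely special situation is \(n=2\): there the block \(1^{n-3}\) is empty, the sliding steps are vacuous, Theorem~\ref{xx-1} degenerates into the definition of \(g\)-inverse, and Step~1 becomes the familiar identity \((xy)^{-1}=y^{-1}x^{-1}\); I would handle this case first to avoid notational contortions.
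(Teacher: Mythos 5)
Your proposal is correct, but it is organized quite differently from the paper's proof. The paper guesses the explicit candidate inverse \(g\left(a_n^{-1},a_{n-1}^{-1},\dotsc,a_1^{-1}\right)\) and verifies both defining identities in one long telescoping computation, repeatedly re-associating so that each pair \(a_i,a_i^{-1}\) becomes adjacent inside an inner \(g\), collapses to \(1\), and is absorbed; the payoff is a closed-form expression for \(g\left(a_1^n\right)^{-1}\) (the reversed product of inverses), which is more information than the theorem demands. You instead modularize: a binary closure lemma (\(x,y\in\mathcal{I}(R)\Rightarrow g\left(x,y,1^{n-2}\right)\in\mathcal{I}(R)\), with inverse \(g\left(y^{-1},x^{-1},1^{n-2}\right)\)) plus a peeling identity and downward induction, producing the inverse of \(g\left(a_1^n\right)\) only as a nested expression rather than a flat one. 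The underlying cancellation mechanism — associativity, sliding units, and Theorem~\ref{xx-1} — is the same in both arguments, but your version has a genuine rigor advantage: by keeping every composite \(g\)-word at length exactly \(2n-1\), each re-association is a literal instance of the stated associative law, whereas the paper's chain implicitly invokes general associativity for words of length \(3n-2\). Two small points to tidy up: the step \(g\left(y,1^{n-2},v\right)=g\left(y,v,1^{n-2}\right)\) needs \(n-2\) successive applications of the slide-a-unit theorem (it moves one \(1\) at a time), and the involution \(\left(x^{-1}\right)^{-1}=x\) should be justified by the symmetry of the defining relation \(g\left(a,b,1^{n-2}\right)=g\left(b,a,1^{n-2}\right)=1\); both are routine. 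Your separate treatment of \(n=2\) is sensible since \(1^{n-3}\) is then vacuous.
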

\begin{proof}
	Suppose \(a_{1}^{n}\in \mathcal{I}( R)\) is arbitrary. Note that
	\begin{align*}
		g\Bigl( g\left( a_{1}^{n}\right) , & g\left( a_{n}^{-1} ,a_{n-1}^{-1} ,\dotsc ,a_{1}^{-1}\right) ,1^{n-2}\Bigr)\\
		& =g\left( a_{1}^{n-1} ,g\left( g\left( a_{n} ,a_{n}^{-1} ,1^{n-2}\right) ,a_{n-1}^{-1} ,a_{n-2}^{-1} ,\dotsc ,a_{1}^{-1}\right)\right)\\
		& =g\left( a_{1}^{n-1} ,g\left( 1,a_{n-1}^{-1} ,a_{n-2}^{-1} ,\dotsc ,a_{1}^{-1}\right)\right)\\
		& =g\left( a_{1}^{n-2} ,1,g\left( g\left( a_{n-1} ,a_{n-1}^{-1} ,1^{n-2}\right) ,1,a_{n-2}^{-1} ,a_{n-3}^{-1} \dotsc ,a_{1}^{-1}\right)\right)\\
		& =g\left( a_{1}^{n-2} ,1,g\left( 1,1,a_{n-2}^{-1} ,\dotsc ,a_{1}^{-1}\right)\right)\\
		& =g\left( a_{1}^{n-3} ,1^{2} ,g\left( g\left( a_{n-2} ,a_{n-2}^{-1} ,1^{n-2}\right) ,1^{2} ,a_{n-3}^{-1} ,a_{n-4}^{-1} ,\dotsc ,a_{1}^{-1}\right)\right)\\
		& =g\left( a_{1}^{n-3} ,1^{2} ,g\left( 1,1^{2} ,a_{n-3}^{-1} ,a_{n-4}^{-1} ,\dotsc ,a_{1}^{-1}\right)\right)\\
		& \ \ \ \ \ \ \ \ \ \ \ \ \ \ \ \ \ \ \ \ \  \ \ \ \ \ \ \ \ \ \ \ \ \vdots \\
		& =g\left( a_{1} ,1^{n-2} ,g\left( 1,1^{n-2} ,a_{1}^{-1}\right)\right)\\
		&=g\left( 1^{n-1} ,g\left( 1,1^{n-1}\right)\right)\\
		& =1
	\end{align*}
and
\begin{align*}
	g\Bigl( g\bigl( a_{n}^{-1} , & a_{n-1}^{-1} ,\dotsc ,a_{1}^{-1}\bigr) ,g\left( a_{1}^{n}\right) ,1^{n-2}\Bigr)\\
	& =g\left( a_{n}^{-1} ,a_{n-1}^{-1} ,\dotsc ,a_{2}^{-1} ,g\left( g\left( a_{1}^{-1} ,a_{1} ,1^{n-2}\right) ,a_{2}^{n}\right)\right)\\
	& =g\left( a_{n}^{-1} ,a_{n-1}^{-1} ,\dotsc ,a_{2}^{-1} ,g\left( 1,a_{2}^{n}\right)\right)\\
	& =g\left( a_{n}^{-1} ,a_{n-1}^{-1} ,\dotsc ,a_{3}^{-1} ,1,g\left( g\left( a_{2}^{-1} ,a_{2} ,1^{n-2}\right) ,1,a_{3}^{n}\right)\right)\\
	& =g\left( a_{n}^{-1} ,a_{n-1}^{-1} ,\dotsc ,a_{3}^{-1} ,1,g\left( 1,1,a_{3}^{n}\right)\right)\\
	& =g\left( a_{n}^{-1} ,a_{n-1}^{-1} ,\dotsc ,a_{4}^{-1} ,1^{2} ,g\left( g\left( a_{3}^{-1} ,a_{3} ,1^{n-2}\right) ,1^{2} ,a_{4}^{n}\right)\right)\\
	& =g\left( a_{n}^{-1} ,a_{n-1}^{-1} ,\dotsc ,a_{4}^{-1} ,1^{2} ,g\left( 1,1^{2} ,a_{4}^{n}\right)\right)\\
	& \ \ \ \ \ \ \ \ \ \ \ \ \ \ \ \ \ \ \ \ \ \ \ \ \ \ \ \ \ \vdots \\
	& =g\left( a_{n}^{-1} ,1^{n-2} ,g\left( 1,1^{n-2} ,a_{n}\right)\right)\\
	&=g\left( 1^{n-1} ,1\right)\\
	& =1.
\end{align*}
Therefore, \( g\left( a_{n}^{-1} ,a_{n-1}^{-1} ,\dotsc ,a_{1}^{-1}\right)\) is \( g \)-inverse of \( g\left( a_{1}^{n}\right)\) and \( g\left( a_{1}^{n}\right)\) is invertible.
\end{proof}

\subsection{Homomorphism of \(\boldsymbol{(m,n)}\)-seminearrings} We remember the following definition adopted from \cite[Definition 16]{Alam}.
\begin{definition}
	A mapping \(\psi:R\to R'\) form \((m,n)\)-seminearring \((R,f,g)\) into \((m,n)\)-seminearring \((R',f',g')\) is called a homomorphism if
	\begin{align*}
		\psi\left( f\left(a_1^m \right)  \right) &=f'\left(\psi(a_1), \psi(a_2),\ldots,\psi(a_m) \right) \\
		\psi\left( g\left(b_1^n \right)  \right) &=g'\left(\psi(b_1), \psi(b_2),\ldots,\psi(b_n) \right) 
	\end{align*}
for all \(a_1^m,b_1^n\in R\).
\end{definition}
\newpage
\begin{definition}
	Let \((R,f,g)\) and \((R',f',g')\) be \((m,n)\)-seminearrings.
	\begin{enumerate}
		\item Homomorphism \(\psi :R\rightarrow R'\) is called a monomorphism if \(\psi\) is injective. 
		\item Homomorphism \(\psi :R\rightarrow R'\) is called an epimorphism if \(\psi\) is surjective.
		\item Homomorphism \(\psi :R\rightarrow R'\) is called an isomorphism if \(\psi\) is bijective.
	\end{enumerate}
\end{definition}
\begin{theorem}
	Let \((R,f,g)\), \((S,f',g')\), and \((T,f'',g'')\) be \((m,n)\)-seminearrings. If \( \psi :R\rightarrow S\) and \( \varphi :S\rightarrow T\) are homomorphisms, then \( \varphi \circ\psi :R\rightarrow T\) is also a homomorphism.
\end{theorem}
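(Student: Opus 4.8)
The plan is to verify directly that $\varphi\circ\psi$ satisfies the two defining identities of a homomorphism of $(m,n)$-seminearrings: compatibility with the $m$-ary additions and compatibility with the $n$-ary multiplications. Since both $\psi$ and $\varphi$ are already known to be homomorphisms, the argument is a straightforward two-step chase — first push the element through $\psi$, then through $\varphi$ — together with the fact that a composite of maps between sets is again a map, so that $\varphi\circ\psi\colon R\to T$ is at least well defined.

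Concretely, for the additive condition I would take arbitrary $a_1^m\in R$ and compute
\[
(\varphi\circ\psi)\bigl(f(a_1^m)\bigr)=\varphi\bigl(\psi(f(a_1^m))\bigr)=\varphi\bigl(f'(\psi(a_1),\dotsc,\psi(a_m))\bigr),
\]
using that $\psi$ is a homomorphism; then applying the homomorphism property of $\varphi$ yields
\[
\varphi\bigl(f'(\psi(a_1),\dotsc,\psi(a_m))\bigr)=f''\bigl(\varphi(\psi(a_1)),\dotsc,\varphi(\psi(a_m))\bigr)=f''\bigl((\varphi\circ\psi)(a_1),\dotsc,(\varphi\circ\psi)(a_m)\bigr).
\]
The multiplicative condition is handled in exactly the same way, with arbitrary $b_1^n\in R$ and the operations $g,g',g''$ in place of $f,f',f''$, giving $(\varphi\circ\psi)(g(b_1^n))=g''((\varphi\circ\psi)(b_1),\dotsc,(\varphi\circ\psi)(b_n))$.

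Having established both identities, I would conclude that $\varphi\circ\psi$ is a homomorphism of $(m,n)$-seminearrings, completing the proof. I do not expect any genuine obstacle here: the only point requiring care is bookkeeping of the three distinct pairs of operations $(f,g)$, $(f',g')$, $(f'',g'')$ and keeping the argument lists correctly aligned, since the result ultimately reduces to associativity of function composition combined with the defining properties of $\psi$ and $\varphi$.
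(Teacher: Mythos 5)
Your proposal is correct and follows essentially the same route as the paper: a direct computation pushing $f(a_1^m)$ first through $\psi$ and then through $\varphi$ to obtain $f''((\varphi\circ\psi)(a_1),\dotsc,(\varphi\circ\psi)(a_m))$, with the multiplicative identity handled by the identical argument for $g$, $g'$, $g''$. No gaps.
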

\begin{proof}
	Suppose that \(a_{1}^{m} ,b_{1}^{n} \in R\). Then
	\begin{align*}
		(\varphi \circ \psi)\left( f\left( a_{1}^{m}\right)\right) & =\varphi \left( \psi \left( f\left( a_{1}^{m}\right)\right)\right)\\
		& =\varphi( f'( \psi(a_{1}) ,\psi(a_{2}) ,\dotsc ,\psi(a_{m})))\\
		& =f''( \varphi( \psi(a_{1})) ,\varphi( \psi(a_{2})) ,\dotsc ,\varphi( \psi(a_{m})))\\
		&=f''(( \varphi \circ \psi )( a_{1}) ,\varphi \circ \psi )( a_{2}) ,\dotsc ,\varphi \circ \psi )( a_{m}))
	\end{align*}
	By a similar argument, \(( \varphi \circ \psi )\left( g\left( b_{1}^{n}\right)\right) =g''(( \varphi \circ \psi )( b_{1}) ,\dotsc ,( \varphi \circ \psi )( b_{n}))\). 	
\end{proof}
\begin{theorem}
	Let \((R,f,g)\) and \((R',f',g')\) be \((m,n)\)-seminearrings and \( \psi :R\rightarrow R'\) be function.
	\begin{enumerate}
		\item If \( \psi :R\rightarrow R'\) is a homomorphism, then \( \left(\emph{\text{Im}}( \psi ) ,f',g'\right)\) is an \((m,n)\)-subseminearring of \((R',f',g')\).
		\item If \( \psi :R\rightarrow R'\) is an epimorphism and \((I,f,g)\) is an \((m,n)\)-ideal of \((R,f,g)\), then \(( \psi( I),f',g')\) is an \((m,n)\)-ideal of \((R',f',g')\).
	\end{enumerate}
\end{theorem}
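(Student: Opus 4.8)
The plan is to verify both parts directly from the closure criteria established earlier, namely Lemma~\ref{lemma-subseminearring} for part (1) and Lemma~\ref{lemma-ideals} for part (2), using nothing more than the two defining identities of a homomorphism.

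For part (1), first I would note that $\mathrm{Im}(\psi)\neq\emptyset$ since $R\neq\emptyset$. Then, to apply Lemma~\ref{lemma-subseminearring}, I would take arbitrary $a_1'^m\in\mathrm{Im}(\psi)$, write $a_i'=\psi(a_i)$ with $a_i\in R$, and compute $f'\!\left(a_1'^m\right)=f'\!\left(\psi(a_1),\dotsc,\psi(a_m)\right)=\psi\!\left(f\!\left(a_1^m\right)\right)\in\mathrm{Im}(\psi)$; the same computation with $g'$ and $n$ arguments gives closure under $g'$. By Lemma~\ref{lemma-subseminearring} this makes $\left(\mathrm{Im}(\psi),f',g'\right)$ an $(m,n)$-subseminearring of $(R',f',g')$.

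For part (2), I would again start by observing $\psi(I)\neq\emptyset$, then check the two conditions of Lemma~\ref{lemma-ideals}. Condition (1), closure under $f'$, is proved exactly as above using that $I$ is closed under $f$. For condition (2), fix $1\leqslant i\leqslant n$, let $x'\in\psi(I)$ with $x'=\psi(x)$, $x\in I$, and let $b_1'^{i-1},b_{i+1}'^{n}\in R'$. Here I would invoke surjectivity of $\psi$ to choose $b_j\in R$ with $\psi(b_j)=b_j'$ for each $j$, and then compute $g'\!\left(b_1'^{i-1},x',b_{i+1}'^{n}\right)=\psi\!\left(g\!\left(b_1^{i-1},x,b_{i+1}^{n}\right)\right)$, which lies in $\psi(I)$ because $g\!\left(b_1^{i-1},x,b_{i+1}^{n}\right)\in I$ by the $i$-$(m,n)$-ideal property of $I$. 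Lemma~\ref{lemma-ideals} then yields that $\left(\psi(I),f',g'\right)$ is an $(m,n)$-ideal of $(R',f',g')$.

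The only genuinely delicate point is the use of surjectivity in part (2): without $\psi$ being an epimorphism, arbitrary elements $b_j'\in R'$ need not be images of elements of $R$, so we could not transport the ideal absorption property from $I$ to $\psi(I)$. I expect this to be the main thing to flag, while the rest is a routine application of the homomorphism identities and the two closure lemmas.
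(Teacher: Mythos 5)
Your proposal is correct and follows essentially the same route as the paper: both parts reduce to the closure criteria of Lemma~\ref{lemma-subseminearring} and Lemma~\ref{lemma-ideals}, pull preimages through the homomorphism identities, and invoke surjectivity in part (2) precisely to lift the coefficients $b_j'\in R'$ back to $R$ so the absorption property of $I$ can be transported. Nothing further to flag.
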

\begin{proof} (1) Since \( \psi\) is a function, \( \text{Im}( \psi )\neq \emptyset\). If \( a_{1}^{m} ,b_{1}^{n} \in \text{Im}( \psi )\), then there exists \( r_{1}^{m} , s_{1}^{n} \in R\) such that \( \psi(r_{i})=a_{i}\) \( (1\leqslant i\leqslant m)\) and \( \psi(s_{j})=b_{j}\) \( (1\leqslant j\leqslant n)\). Since
		\begin{equation*}
			f'\left( a_{1}^{m}\right)=f'( \psi(r_{1}) ,\psi(r_{2}) ,\dotsc ,\psi(r_{m})) =\psi \left( f\left( r_{1}^{m}\right)\right) \in \text{Im}( \psi )
		\end{equation*}
		and
		\begin{equation*}
			g'\left( b_{1}^{n}\right)=g'( \psi(s_{1}) ,\psi(s_{2}) ,\dotsc ,\psi(s_{n})) =\psi \left( g\left( s_{1}^{n}\right)\right) \in \text{Im}( \psi ) ,
		\end{equation*}
		by Lemma \ref{lemma-subseminearring}, \( \left(\text{Im}( \psi ) ,f',g'\right)\) is an \((m,n)\)-subseminearring of \((R',f',g')\).
		
		(2) Suppose that \(b_{1}^{i-1} ,b_{i+1}^{n} \in R'\) \((1\leqslant i\leqslant n)\) and \(a_{1}^{m} ,c\in \psi(I)\). Then there are \(y_{1}^{i-1} ,y_{i+1}^{n} \in R\) and \(x_{1}^{m} ,z\in I\) such that \(\psi (y_{j}) =b_{j} \ (1\leqslant j\leqslant n, \ j\neq i)\), \(\psi(x_{k})=a_{k}\) \((1\leqslant k\leqslant m)\), and \(\psi(z)=c\). Since \(x_{1}^{m}\in I\) and \(\displaystyle(I,f,g)\) is an \(\displaystyle(m,n)\)-ideal of \(\displaystyle(R,f,g)\), we have \(f\left( x_{1}^{m}\right) \in I\). Consequently, 
	\begin{equation*}
	f'\left( a_{1}^{m}\right)=f'( \psi( x_{1}) ,\psi( x_{2}) ,\dotsc ,\psi( x_{m})) =\psi \left( f\left( x_{1}^{m}\right) \right) \in \psi(I) .
\end{equation*}
Since \(y_{1}^{i-1} ,y_{i+1}^{n}\in R\), \(z\in I\), and \(\displaystyle(I, f , g)\) is an \(i\)-\((m,n)\)-ideal of \((R,f,g)\), we see that \(g\left( y_{1}^{i-1} ,z,y_{i+1}^{n}\right) \in I\). Therefore
\begin{align*}
	g'\left( b_{1}^{i-1} ,c,b_{i+1}^{n}\right)&=g'( \psi(y_{1}) ,\dotsc ,\psi(y_{i-1}) ,\psi(z) ,\psi(y_{i+1}) ,\dotsc ,\psi(y_{n})) \\
	&=\psi \left( g\left( y_{1}^{i-1} ,z,y_{i+1}^{n}\right)\right) \in \psi( I ).
\end{align*}
By Lemma \ref{lemma-ideals}, we conclude that \((\psi(I) ,f',g')\) is an \((m,n)\)-ideal of \((R',f',g')\).
\end{proof}
\subsection{Factor \(\boldsymbol{(m,n)}\)-seminearring} In this section, we discuss the construction of a new \( (m,n) \)-seminearring of \( (m,n) \)-seminearring by a congruence relation.
\begin{definition}
	Let \(( R,f,g)\) be an \(( m,n)\)-seminearring. An equivalence relation \( \rho \) on \(R\) is called an congruence relation or congruence provided that for any \(a_1^m,b_1^m,c_1^n,d_1^n\in R\):
	\begin{enumerate}
		\item If \( a_{i} \rho b_{i}\) \( ( 1\leqslant i\leqslant m)\) then \( f\left( a_{1}^{m}\right) \rho f\left( b_{1}^{m}\right)\);
		\item If \( c_{j} \rho d_{j}\) \( ( 1\leqslant j\leqslant n)\) then \( g\left( c_{1}^{n}\right) \rho g\left( d_{1}^{n}\right)\).
	\end{enumerate}
\end{definition}
\begin{theorem}
	Let \(( R_{1} ,f_{1} ,g_{1})\) and \(( R_{2} ,f_{2} ,g_{2})\) be \(( m,n)\)-seminearrings. If \(\psi :R_{1}\rightarrow R_{2}\) is a homomorphism then kernel of \(\psi\), denoted \(\text{\emph{Ker}}\left(  \psi\right) \), is defined \[
		\text{\emph{Ker}}\left( \psi\right) \coloneqq\left\{( a,b)\in R_{1}^{2}\middle|\psi(a)=\psi(b)\right\}
	\] is a congruence on \(R_{1}\).
\end{theorem}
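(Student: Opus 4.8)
The plan is to verify the two defining clauses of a congruence: first that $\mathrm{Ker}(\psi)$ is an equivalence relation on $R_1$, and then that it is compatible with both the $m$-ary operation $f_1$ and the $n$-ary operation $g_1$. All of this will follow from the single fact that $\mathrm{Ker}(\psi)$ is the pullback along $\psi$ of the equality relation on $R_2$, together with the homomorphism identities.

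First I would check that $\mathrm{Ker}(\psi)$ is an equivalence relation. Reflexivity is immediate since $\psi(a)=\psi(a)$ for every $a\in R_1$, hence $(a,a)\in\mathrm{Ker}(\psi)$. Symmetry follows because $\psi(a)=\psi(b)$ implies $\psi(b)=\psi(a)$. Transitivity follows by composing equalities: if $\psi(a)=\psi(b)$ and $\psi(b)=\psi(c)$ then $\psi(a)=\psi(c)$. So $(a,b),(b,c)\in\mathrm{Ker}(\psi)$ gives $(a,c)\in\mathrm{Ker}(\psi)$.

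Next I would verify compatibility with $f_1$. Suppose $a_1^m, b_1^m \in R_1$ with $a_i\,\mathrm{Ker}(\psi)\,b_i$, i.e. $\psi(a_i)=\psi(b_i)$, for every $1\leqslant i\leqslant m$. Using that $\psi$ is a homomorphism,
\[
\psi\left(f_1\left(a_1^m\right)\right)=f_2\left(\psi(a_1),\dotsc,\psi(a_m)\right)=f_2\left(\psi(b_1),\dotsc,\psi(b_m)\right)=\psi\left(f_1\left(b_1^m\right)\right),
\]
so $\left(f_1\left(a_1^m\right),f_1\left(b_1^m\right)\right)\in\mathrm{Ker}(\psi)$. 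The argument for $g_1$ is identical, mutatis mutandis: if $c_1^n, d_1^n\in R_1$ with $\psi(c_j)=\psi(d_j)$ for $1\leqslant j\leqslant n$, then applying the second homomorphism identity in the same way yields $\psi\left(g_1\left(c_1^n\right)\right)=\psi\left(g_1\left(d_1^n\right)\right)$, hence $g_1\left(c_1^n\right)\,\mathrm{Ker}(\psi)\,g_1\left(d_1^n\right)$.

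There is no real obstacle here; the statement is essentially formal, and the only thing requiring a little care is keeping the $m$-ary and $n$-ary index bookkeeping straight and invoking the correct one of the two homomorphism equations in each of the two compatibility checks. Once both clauses are verified, the definition of congruence relation is satisfied and the proof is complete.
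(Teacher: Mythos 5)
Your proposal is correct and follows essentially the same route as the paper: both use the homomorphism identities $\psi(f_1(a_1^m))=f_2(\psi(a_1),\dotsc,\psi(a_m))$ and its $n$-ary analogue to transfer the componentwise equalities to equality of images under $f_1$ and $g_1$. You are in fact slightly more complete than the paper, which silently omits the (trivial but formally required) verification that $\mathrm{Ker}(\psi)$ is an equivalence relation before checking compatibility.
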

\begin{proof}
	If \(a_{i} \text{Ker}\left(\psi\right) b_{i}\) \(( 1\leqslant i\leqslant m)\) and \(c_{j} \text{Ker}\left(\psi\right) d_{j}\) \(( 1\leqslant j\leqslant n)\), then \(\psi ( a_{i}) =\psi ( b_{i})\) \(( 1\leqslant i\leqslant m)\) and \(\psi ( c_{j}) =\psi ( d_{j})\) \(( 1\leqslant j\leqslant n)\), whence
	\begin{align*}
		\psi \left( f\left( a_{1}^{m}\right)\right) & =f'( \psi ( a_{1}) ,\psi ( a_{2}) ,\dotsc ,\psi ( a_{m}))\\
		& =f'( \psi ( b_{1}) ,\psi ( b_{2}) ,\dotsc ,\psi ( b_{m}))\\
		& =\psi \left( f\left( b_{1}^{m}\right)\right)
	\end{align*}
	and similarly \(\psi\left( g\left( c_{1}^{n}\right)\right) =\psi\left( g\left( d_{1}^{n}\right)\right)\). Therefore, \(f\left( a_{1}^{m}\right)\text{Ker}\left(\psi\right) f\left( b_{1}^{m}\right)\) and \(g\left( c_{1}^{n}\right)\) \(\text{Ker}\left(\psi\right) g\left( d_{1}^{n}\right)\).
\end{proof}
\begin{theorem}\label{faktor}
	Let \(( R,f,g)\) be an \((m,n)\)-seminearring and \(\rho\) be a congruence on \(R\). Then \(( R/\rho ,f',g')\) is together with \(m\)-ary operation \(f':( R/\rho )^{m}\rightarrow R/\rho \) defined by
	\begin{equation*}
		f'( \rho(x_{1}) ,\rho(x_{2}) ,\dotsc ,\rho(x_{m})) =\rho \left( f\left( x_{1}^{m}\right)\right)
	\end{equation*}
	and \(n\)-ary operation \(g':( R/\rho )^{n}\rightarrow R/\rho\) defined by
	\begin{equation*}
		g'( \rho(y_{1}) ,\rho(y_{2}) ,\dotsc ,\rho(y_{n})) =\rho \left( g\left( y_{1}^{n}\right)\right) .
	\end{equation*}
\end{theorem}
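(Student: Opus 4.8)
I would read the statement as the assertion that $(R/\rho,f',g')$ is an $(m,n)$-seminearring, with the displayed formulas being genuine definitions of $f'$ and $g'$. The plan is therefore two stages: first establish well-definedness, then transfer the three seminearring axioms from $R$.

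Well-definedness is the only place the congruence hypothesis is genuinely used, and I regard it as the main obstacle, though a short one. Suppose $\rho(x_i)=\rho(y_i)$ for $1\leqslant i\leqslant m$; this means $x_i\,\rho\,y_i$ for each $i$, so condition (1) in the definition of a congruence gives $f(x_1^m)\,\rho\,f(y_1^m)$, hence $\rho(f(x_1^m))=\rho(f(y_1^m))$. This is exactly the statement that the prescribed value of $f'(\rho(x_1),\dots,\rho(x_m))$ does not depend on the chosen representatives, so $f'$ is a well-defined $m$-ary operation on $R/\rho$; the same argument with condition (2) handles $g'$. Since $f$ and $g$ take values in $R$, the maps $f'$ and $g'$ do land in $R/\rho$, as required.

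For the axioms, each property transfers by unwinding the defining formulas and invoking the corresponding property of $f$ and $g$. For associativity of $f'$: given classes $\rho(a_1),\dots,\rho(a_{2m-1})$ and indices $1\leqslant i<j\leqslant m$, applying the definition of $f'$ twice shows the left-hand side of the associative law equals $\rho\!\left(f\!\left(a_1^{i-1},f\!\left(a_i^{m+i-1}\right),a_{m+i}^{2m-1}\right)\right)$ and the right-hand side equals $\rho\!\left(f\!\left(a_1^{j-1},f\!\left(a_j^{m+j-1}\right),a_{m+j}^{2m-1}\right)\right)$; these agree because $f$ is associative in $R$, so $(R/\rho,f')$ is an $m$-semigroup. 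The identical computation with $g$ in place of $f$ shows $(R/\rho,g')$ is an $n$-semigroup. Finally, for $n$-distributivity of $g'$ with respect to $f'$ (recall the convention that ``$(m,n)$-seminearring'' abbreviates ``$n$-$(m,n)$-seminearring''): for classes $\rho(b_1),\dots,\rho(b_{n-1})$ and $\rho(a_1),\dots,\rho(a_m)$, applying the definitions of $f'$ and then $g'$ gives $g'\!\left(\rho(b_1),\dots,\rho(b_{n-1}),f'(\rho(a_1),\dots,\rho(a_m))\right)=\rho\!\left(g\!\left(b_1^{n-1},f(a_1^m)\right)\right)$, while the right-hand side of the distributive law unwinds to $\rho\!\left(f\!\left(g(b_1^{n-1},a_1),\dots,g(b_1^{n-1},a_m)\right)\right)$; these coincide because $g$ is $n$-distributive with respect to $f$ in $R$. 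All three axioms being verified, $(R/\rho,f',g')$ is an $(m,n)$-seminearring; in the general $t$-$(m,n)$ setting only this last step changes, and only notationally.
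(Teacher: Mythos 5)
Your proposal is correct and follows essentially the same route as the paper's proof: well-definedness of $f'$ and $g'$ from the two congruence conditions, then transfer of associativity and distributivity by unwinding the defining formulas and invoking the corresponding properties in $R$. (If anything, your distributivity check, which places $f'$ in the last slot of $g'$ in accordance with the paper's convention that ``$(m,n)$-seminearring'' means ``$n$-$(m,n)$-seminearring,'' is more faithful to that convention than the paper's own computation, which places $f'$ in the first slot.)
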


\begin{proof} Suppose that \(\rho(x_{i})=\rho(y_{i})\) \(( 1\leqslant i\leqslant m)\). Then \(x_{i} \rho y_{i}\) \(( 1\leqslant i\leqslant m)\). Since \(\rho\) is a congruence on \(R\) and \(x_{i} \rho y_{i}\) \(( 1\leqslant i\leqslant m)\), we have \(f\left( x_{1}^{m}\right) \rho f\left( y_{1}^{m}\right)\). Hence \(\rho \left( f\left( x_{1}^{m}\right)\right) =\rho \left( f\left( y_{1}^{m}\right)\right)\). Consequently, \[f'( \rho(x_{1}) ,\rho(x_{2}) ,\dotsc ,\rho(x_{m})) =f'( \rho( y_{1}) ,\rho( y_{2}) ,\dotsc ,\rho( y_{m})).\] Thus \(f'\) is well-defined. Analogously, we can show that \( g' \) is well-defined.
	
	Suppose that \(\rho(a_{1}) ,\rho(a_{2}) ,\dotsc ,\rho(a_{2m-1}) \in R/\rho\) and \(1\leqslant i< j\leqslant m\). Then
	\begin{align*}
		f'( \rho ( a_{1}) , & \dotsc ,\rho ( a_{i-1}) ,f'( \rho ( a_{i}),\dotsc ,\rho ( a_{m+i-1})) ,\rho ( a_{m+i}),\dotsc ,\rho ( a_{2m-1}))\\
		& =f'\left( \rho ( a_{1}) ,\dotsc ,\rho ( a_{i-1}) ,\rho \left( f\left( a_{i}^{m+i-1}\right)\right) ,\rho ( a_{m+i}),\dotsc ,\rho ( a_{2m-1})\right)\\
		& =\rho \left( f\left( a_{1}^{i-1} ,f\left( a_{i}^{m+i-1}\right) ,a_{m+i}^{2m-1}\right)\right)\\
		& =\rho \left( f\left( a_{1}^{j-1} ,f\left( a_{j}^{m+j-1}\right) ,a_{m+j}^{2m-1}\right)\right)\\
		& =f'\left( \rho ( a_{1}),\dotsc ,\rho ( a_{j-1}) ,\rho \left( f\left( a_{j}^{m+j-1}\right)\right) ,\rho ( a_{m+j}),\dotsc ,\rho ( a_{2m-1})\right)\\
		& =f'( \rho ( a_{1}),\dotsc ,\rho ( a_{j-1}) ,f'( \rho ( a_{j}),\dotsc ,\rho ( a_{m+j-1})) ,\rho ( a_{m+j}),\dotsc ,\rho ( a_{2m-1})),
	\end{align*}
	so \(( R/\rho ,f')\) is an \(m\)-semigroup. Similarly, \(( R/\rho ,g')\) is an \(n\)-semigroup. 
	
	Here, it will be proved that \(g'\) is left distributive with respect to \(f'\). Suppose that \(\rho(a_{1}) ,\rho(a_{2}) ,\dotsc ,\rho(a_{m}) ,\rho(b_{2}) ,\rho(b_{3}) ,\dotsc ,\rho(b_{n}) \in R/\rho\). Then
	\begin{align*}
		g'( f'( \rho ( a_{1}) , &\dotsc ,\rho ( a_{m})) ,\rho ( b_{2}),\dotsc ,\rho ( b_{n}))\\
		& =g'\left( \rho \left( f\left( a_{1}^{m}\right)\right) ,\rho ( b_{2}),\dotsc ,\rho ( b_{n})\right)\\
		& =\rho \left( g\left( f\left( a_{1}^{m}\right) ,b_{2}^{n}\right)\right)\\
		& =\rho \left( f\left( g\left( a_{1} ,b_{2}^{n}\right),\dotsc ,g\left( a_{m} ,b_{2}^{n}\right)\right)\right)\\
		& =f'\left( \rho \left( g\left( a_{1} ,b_{2}^{n}\right)\right),\dotsc ,\rho \left( g\left( a_{m} ,b_{2}^{n}\right)\right)\right)\\
		& =f'( g'( \rho ( a_{1}) ,\rho ( b_{2}),\dotsc ,\rho ( b_{n})),\dotsc ,g'( \rho ( a_{m}) ,\rho ( b_{2}) ,\dotsc ,\rho ( b_{n})))
	\end{align*}
	Hence, \(g'\) is left distributive with respect to \(f'\). Thus, we conclude 
	that \(( R/\rho ,f',g')\) is an \(( m,n)\)-seminearring.
\end{proof}
The new \((m,n)\)-seminearring structure discussed in Theorem \ref{faktor}, i.e. \((R/\rho, f', g')\), is called factor $(m,n)$-seminearring of \(R\) by \(\rho\).

\bibliographystyle{amsplain}

\end{document}